\numberwithin{equation}{section}
\numberwithin{figure}{section}
\theoremstyle{plain}
\newtheorem{thm}{\protect\theoremname}
  \theoremstyle{remark}
  \newtheorem{rem}[thm]{\protect\remarkname}
  \theoremstyle{definition}
  \newtheorem{defn}[thm]{\protect\definitionname}
  \theoremstyle{plain}
  \newtheorem{fact}[thm]{\protect\factname}
  \theoremstyle{plain}
  \newtheorem{lem}[thm]{\protect\lemmaname}
  \providecommand{\definitionname}{Definition}
  \providecommand{\factname}{Fact}
  \providecommand{\lemmaname}{Lemma}
  \providecommand{\remarkname}{Remark}
\providecommand{\theoremname}{Theorem}
\begin{document}

\title{Uniform Factorial Decay Estimate for the Remainder of Rough Taylor
Expansion}

\author{Horatio Boedihardjo, Terry Lyons, Danyu Yang}

\address{Oxford-Man Institute of Quantitative Finance, Eagle House, Walton
Well Road, Oxford. OX2 6ED, UK. }
\begin{abstract}
We establish an uniform factorial decay estimate for the Taylor approximation
of solutions to controlled differential equations. Its proof requires
a factorial decay estimate for controlled paths which is interesting
in its own right. 
\end{abstract}
\maketitle

\section{ntroduction }

For a controlled differential equation of the form 
\begin{eqnarray}
\mathrm{d}Y_{t} & = & f\left(Y_{t}\right)\mathrm{d}X_{t}\nonumber \\
Y_{0} & = & y_{0}.\label{eq:controlled differential equation}
\end{eqnarray}
where $X:\left[0,T\right]\rightarrow\mathbb{R}^{d}$ is a path with
bounded variation and $f:\mathbb{R}^{e}\rightarrow L\left(\mathbb{R}^{d},\mathbb{R}^{e}\right)$
is a smooth vector field, we are interested in estimating 
\begin{eqnarray}
 &  & Y_{t}-Y_{s}-\sum_{k=1}^{N}f^{\circ k}\left(Y_{s}\right)\int_{s<s_{1}<\ldots<s_{k}<t}\mathrm{d}X_{s_{1}}\otimes\ldots\otimes\mathrm{d}X_{s_{k}}\label{eq:ode taylor}\\
 & \equiv & \int_{s<s_{1}<\ldots<s_{N}<t}f^{\circ N}\left(Y_{s_{1}}\right)-f^{\circ N}\left(Y_{s}\right)\mathrm{d}X_{s_{1}}\otimes\ldots\otimes\mathrm{d}X_{s_{N}},
\end{eqnarray}
where $f^{\circ m}:\mathbb{R}^{e}\rightarrow L\left(\left(\mathbb{R}^{d}\right)^{\otimes m},\mathbb{R}^{e}\right)$
is defined inductively by 
\begin{eqnarray*}
f^{\circ1} & = & f\\
f^{\circ k+1} & = & D\left(f^{\circ k}\right)f.
\end{eqnarray*}
The iterated integrals in (\ref{eq:ode taylor}) will appear numerous
times and we shall use the shorthand
\begin{equation}
X_{s,t}^{k}:=\int_{s<s_{1}<\ldots<s_{k}<t}\mathrm{d}X_{s_{1}}\otimes\ldots\otimes\mathrm{d}X_{s_{k}}.\label{eq:iterated integrals}
\end{equation}

For $p=1$, since the $1-$variation norm of $X$ equals to the $L^{1}$
norm of the derivative of $X$, we have (see for example \cite{FV08})
\begin{equation}
\left|Y_{t}-Y_{s}-\sum_{k=1}^{N}f^{\circ k}\left(Y_{s}\right)X_{s,t}^{k}\right|\leq\left\Vert f^{\circ N}\right\Vert _{\infty}\left\Vert Df\right\Vert _{\infty}\frac{\left|X\right|_{1-var;\left[s,t\right]}^{N+1}}{N!}\label{eq:ODe remainder}
\end{equation}
where 
\[
\left|X\right|_{1-var;\left[s,t\right]}=\sup_{s<t_{1}<\ldots<t_{n}<t}\sum_{i=0}^{n}\left|X_{t_{i+1}}-X_{t_{i}}\right|
\]
and $\left\Vert f^{\circ N}\right\Vert _{\infty}$ denote $\sup_{x\in\mathbb{R}^{e}}\left|f^{\circ N}\left(x\right)\right|$
with $\left|\cdot\right|$ denoting the operator norm 
\[
\left|f^{\circ N}\left(x\right)\right|=\sup_{v\in\left(\mathbb{R}^{d}\right)^{\otimes N}}\frac{\left|f^{\circ N}\left(x\right)\left(v\right)\right|}{\left\Vert v\right\Vert }.
\]

The estimate (\ref{eq:ODe remainder}), when the $1$-variation metric
is replaced by the $p$-variation metric, has been shown in \cite{Dav07}
($p<3$), \cite{Gub04} ($p<3$) and \cite{FV08} (all $p\geq1$)
without the factorial decay factor. We shall prove that
\begin{thm}
\label{thm:main result}Let $X=\left(1,X^{1},\ldots,X^{\lfloor p\rfloor}\right)$
be a $p$-weak geometric rough path. Let $f$ be a Lip($\gamma$)
vector field where $\gamma>p-1$. Let $Y$ be a solution to the rough
differential equation 
\begin{equation}
\mathrm{d}Y_{t}=f\left(Y_{t}\right)\mathrm{d}X_{t}\label{eq:controlled differential equation 2}
\end{equation}
defined in the sense of \cite{FV10}. Then there exists a constant
$C_{p}$ depending only on $p$ such that 
\begin{equation}
\left|Y_{t}-Y_{s}-\sum_{k=1}^{\lfloor\gamma\rfloor}f^{\circ k}\left(Y_{s}\right)X_{s,t}^{k}\right|\leq\frac{1}{\left(\frac{\lfloor\gamma\rfloor}{p}\right)!}\beta^{\lfloor\gamma\rfloor}M_{p,\gamma}\max_{\lfloor\gamma\rfloor-\lfloor p\rfloor+1\leq m\leq\lfloor\gamma\rfloor}\left|f^{\circ m}\right|_{Lip\left(1\right)}\omega\left(s,t\right)^{\frac{\gamma}{p}},\label{eq:main inequality}
\end{equation}
where 
\begin{eqnarray}
M_{p,\gamma} & = & 2C_{p}\left(\left|f\right|_{Lip\left(\gamma\wedge\lfloor p\rfloor+1\right)}\vee1\right)^{\lfloor p\rfloor+1}\left(\left|X\right|_{p-var,\left[0,T\right]}\vee1\right)^{p+1}\nonumber \\
\beta & = & p\left(1+\sum_{r=2}^{\infty}\left(\frac{2}{r-1}\wedge1\right)^{\frac{\lfloor p\rfloor+1}{p}}\right).\label{eq:beta condition}
\end{eqnarray}

\end{thm}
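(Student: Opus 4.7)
The plan is to rewrite the remainder as a rough integral and bound it via an auxiliary factorial-decay estimate for controlled paths (the lemma alluded to in the abstract), applied to the controlled path $f^{\circ\lfloor\gamma\rfloor}(Y)$.

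Setting $N=\lfloor\gamma\rfloor$, the identity stated just before (\ref{eq:iterated integrals}) already gives
\[
J^N_{s,t}\,:=\,Y_t-Y_s-\sum_{k=1}^{N}f^{\circ k}(Y_s)X^k_{s,t}\,=\,\int_{s<s_1<\cdots<s_N<t}\bigl[f^{\circ N}(Y_{s_1})-f^{\circ N}(Y_s)\bigr]\,\mathrm{d}X_{s_1}\otimes\cdots\otimes\mathrm{d}X_{s_N}.
\]
Because $\gamma>p-1$, the map $f^{\circ N}(Y)$ is itself an $X$-controlled path with Gubinelli derivatives $f^{\circ(N+j)}(Y)$, $1\le j\le\lfloor p\rfloor-1$, so the right-hand side is a well-defined rough integral in the sense of \cite{FV10}.

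I will first prove, by induction on $N$, the auxiliary lemma: for any $X$-controlled path $Z$ with sufficiently Lipschitz Gubinelli derivatives $Z^{(k)}$, the Taylor remainder $Z_t-Z_s-\sum_{k=1}^{N}Z^{(k)}_s X^k_{s,t}$ is bounded by $\omega(s,t)^{(N+1)/p}/((N+1)/p)!$ times the appropriate norms. The inductive step uses Chen's relation $X^k_{s,t}=\sum_{j=0}^{k}X^j_{s,u}\otimes X^{k-j}_{u,t}$ to rewrite the additive defect $J^N_{s,t}-J^N_{s,u}-J^N_{u,t}$ as a sum of lower-order Taylor remainders of auxiliary controlled paths tensored with iterated integrals, to which the induction hypothesis applies. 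The factorial is preserved across each merge by a neoclassical-type inequality of the schematic form
\[
\sum_{j=1}^{n-1}\frac{\omega(s,u)^{j/p}}{(j/p)!}\frac{\omega(u,t)^{(n-j)/p}}{((n-j)/p)!}\,\le\,(\beta-p)\,\frac{\omega(s,t)^{n/p}}{(n/p)!},
\]
and this is exactly the origin of the explicit expression for $\beta$ in (\ref{eq:beta condition}). Substituting the auxiliary estimate applied to $Z=f^{\circ N}(Y)$ back into the rough-integral representation, and collecting polynomial factors in $\left|X\right|_{p-var}$ and $\left|f\right|_{\mathrm{Lip}}$ into $M_{p,\gamma}$, yields (\ref{eq:main inequality}).

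The main obstacle is the induction in the auxiliary lemma: a plain sewing argument loses a multiplicative constant at each merge and therefore does not deliver the $1/(N/p)!$ factor. Retaining it requires the neoclassical identity to be invoked at every stage, and the induction must be arranged so that only the Lipschitz norms $\left|f^{\circ m}\right|_{\mathrm{Lip}(1)}$ with $\lfloor\gamma\rfloor-\lfloor p\rfloor+1\le m\le\lfloor\gamma\rfloor$ enter the final bound. A naive Taylor expansion would bring in higher derivatives $\left|f^{\circ m}\right|$ with $m>\lfloor\gamma\rfloor$, which are not controlled under the $\mathrm{Lip}(\gamma)$ assumption. Avoiding this forces a careful allocation of the derivatives between the \emph{integrand} role ($f^{\circ N}$ appearing inside the bracket) and the \emph{controlled-path} role (its Gubinelli expansion), and this is the structural reason for the particular range of $m$ appearing in (\ref{eq:main inequality}).
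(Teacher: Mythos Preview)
Your proposal has the right high-level shape---reduce Theorem~\ref{thm:main result} to a factorial-decay lemma for controlled paths---but two concrete gaps would block the argument as written.

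First, the $N$-fold iterated integral you start from,
\[
J^{N}_{s,t}=\int_{s<s_{1}<\cdots<s_{N}<t}\bigl[f^{\circ N}(Y_{s_{1}})-f^{\circ N}(Y_{s})\bigr]\,\mathrm{d}X_{s_{1}}\otimes\cdots\otimes\mathrm{d}X_{s_{N}},
\]
is a heuristic carried over from the bounded-variation identity~(1.2); for $N=\lfloor\gamma\rfloor>\lfloor p\rfloor$ it is not a rough integral in any standard sense, and ``$f^{\circ N}(Y)$ is an $X$-controlled path'' only furnishes $\lfloor p\rfloor-1$ Gubinelli derivatives, not $N-1$. The paper avoids this entirely: Lemma~\ref{lem:control path lemma} works with the full ladder $Y^{(m)}=f^{\circ m}(Y)$, $0\le m\le\lfloor\gamma\rfloor$, uses only the \emph{single} rough integral via the relation $\int Y^{(m+1)}\,\mathrm{d}X=Y^{(m)}_{t}-Y^{(m)}_{s}$, and runs a \emph{backward} induction on $m$ rather than a forward induction on $N$. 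At each level the remainder is expressed as a limit of compensated Riemann sums and bounded by Young's successive point-removal; the neoclassical inequality enters only to control the error from dropping one partition point (contributing the factor $p$), while $1+\sum_{r\ge2}\bigl(\tfrac{2}{r-1}\wedge1\bigr)^{(\lfloor p\rfloor+1)/p}$ comes from summing over the removals. Your displayed ``neoclassical-type inequality'' with right side $(\beta-p)\,\omega(s,t)^{n/p}/(n/p)!$ is therefore not the mechanism producing $\beta$.

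Second, and more importantly, you never supply the seed estimate. The paper's backward induction requires as input the bound~(\ref{eq:first p control}) for the \emph{top} levels $\lfloor\gamma\rfloor-\lfloor p\rfloor+1\le m\le\lfloor\gamma\rfloor$, and obtaining this is the whole content of the ``Proof of Theorem~1'' section: one compares $Y_{t}$ with the solution $Y^{s,t}_{t}$ driven by a geodesic approximation $x^{s,t}$, invokes the Friz--Victoir Euler estimate (Theorem~10.16 in \cite{FV10}) to bound $|Y_{t}-Y^{s,t}_{t}|$, and treats the smooth remainder by the classical bounded-variation identity. This external analytic input is precisely what fixes the form of $M_{p,\gamma}$ and why only $|f^{\circ m}|_{\mathrm{Lip}(1)}$ with $m$ in that top range appears. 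Your sketch treats this as a byproduct of ``arranging the induction'', but no arrangement of an induction on $J^{N}$ will manufacture it; it must be proved separately before the induction can even begin. Relatedly, applying your auxiliary lemma to $Z=f^{\circ N}(Y)$ would call for the nonexistent derivatives $f^{\circ(N+k)}$; the correct object to feed into the lemma is $Z=Y$ itself, with $Z^{(k)}=f^{\circ k}(Y)$, which is exactly how the paper is organised.
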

We refer the readers to Definition 9.16 and Definition 10.2 in \cite{FV10}
for the definition of Lip ($\gamma$) vector fields and weak geometric
rough paths respectively. We shall however recall the definition of
$p$-variation and some basic notations in Section 2.
\begin{rem}
If the equation (\ref{eq:controlled differential equation 2}) has
more than one solution, then any solution must satisfy (\ref{eq:main inequality}). 
\end{rem}

\begin{rem}
Taking the biggest $\gamma$ may not give the best estimate in Theorem
\ref{thm:main result}. In general the term $\max_{\lfloor\gamma\rfloor-\lfloor p\rfloor+1\leq m\leq\lfloor\gamma\rfloor}\left|f^{\circ m}\right|_{Lip\left(1\right)}$
could grow factorially fast in $\gamma$. Since a Lip($\gamma$) function
is also Lip($\gamma^{\prime}$) for all $\gamma^{\prime}<\gamma$,
we may choose $\gamma^{\prime}$ which optimises the estimate (\ref{eq:main inequality}).
\end{rem}
The proof for (\ref{eq:ODe remainder}) relies heavily on the relation
between the $1$-variation of the path and the $L^{1}$ norm of its
derivative. Proving an estimate of the form (\ref{eq:ODe remainder})
for the $p$-variation metric, even without the factorial factor,
requires the clever idea of Young\cite{Young}. The integration with
respect to a path can be expressed in terms of the limit of a Riemann
sums as the size of partition converges to zero. Young's idea was
to estimate the Riemann sum with respect to a partition by removing
points from the partition successively. This idea had been used in
\cite{Lyons94} to show that, for $p<2$, the path iterated integrals
of order $n$ decays at the speed of 
\begin{equation}
c_{p}^{n}\left(\frac{1}{n!}\right)^{\frac{1}{p}}\left\Vert X\right\Vert _{p-var,\left[s,t\right]}^{n}.\label{eq:decay rate}
\end{equation}
with an explicit function $c_{p}$ depending only on $p$ but not
on $n$ nor the path. T. Lyons' proof for the $p\geq2$ case in \cite{Lyons98}
is slightly different and used the neoclassical inequality (\cite{Lyons98},\cite{HaraHino10})
\[
\sum_{k=0}^{N}\frac{1}{\Gamma\left(k/p+1\right)\Gamma\left(\left(n-k\right)/p+1\right)}a^{k/p}b^{\left(n-k\right)/p}\leq p\frac{1}{\Gamma\left(n/p+1\right)}\left(a+b\right)^{n/p}
\]
to obtain a decay rate of the form 
\[
c_{p}^{n}\frac{1}{\Gamma\left(n/p+1\right)}\left\Vert X\right\Vert _{p-var,\left[s,t\right]}^{n}
\]
 where $\Gamma$ is the Gamma function. In \cite{Boe15}, the factorial
decay for the iterated integrals of Branched rough paths had been
established through extending Lyons' earlier technique in \cite{Lyons94}
to the $p\geq2$ regime. In particular, this provides an alternative
proof for the decay of iterated integrals in the $p\geq2$ case without
the use of the neoclassical inequality. In this paper, the fact that
the ``$N$'' in (\ref{eq:ODe remainder}) is greater than $\lfloor p\rfloor$
forced us to use the approach in \cite{Lyons98} instead of that of
\cite{Lyons94}. New ideas will be required to extend our main result
to Branched rough paths as neoclassical inequality does not hold when
the factorial is replaced by factorial for rooted trees.

\thanks{The authors are grateful for the support of the ERC Advanced grant
(grant agreement no. 291244), for which the second author is the principal
investigator. We would also like to thank H. Oberhauser for the useful
discussions.}

\section{The Proof}

\subsection{Notations and basic definitions}

For each $k\in\mathbb{N}$, we equip a norm on $\left(\mathbb{R}^{d}\right)^{\otimes k}$
by identifying it with $\mathbb{R}^{d^{k}}$. If $\pi_{k}$ denotes
the projection $1\oplus\mathbb{R}^{d}\oplus\ldots\oplus\left(\mathbb{R}^{d}\right)^{N}\rightarrow\left(\mathbb{R}^{d}\right)^{\otimes k}$,
then we define a norm on $1\oplus\mathbb{R}^{d}\oplus\ldots\oplus\left(\mathbb{R}^{d}\right)^{N}$
by 
\[
\left\Vert x\right\Vert =\max_{1\leq k\leq N}\left\Vert \pi_{k}\left(x\right)\right\Vert ^{\frac{1}{k}}.
\]

\begin{defn}
Let $T>0$ and $p\geq1$. A path $X:\left[0,T\right]\rightarrow1\oplus\mathbb{R}^{d}\oplus\ldots\oplus\left(\mathbb{R}^{d}\right)^{\lfloor p\rfloor}$
has finite $p$-variation if for all $0<s<t<T$, 
\begin{equation}
\left\Vert X\right\Vert _{p-var,\left[s,t\right]}:=\sup_{s<t_{1}<\ldots<t_{n}<t}\max_{1\leq k\leq\lfloor p\rfloor}\left(\sum_{i=1}^{n}\left\Vert \pi_{k}\left(X_{s}^{-1}X_{t}\right)\right\Vert ^{\frac{p}{k}}\right)^{\frac{1}{p}}<\infty\label{eq:p-variation}
\end{equation}
where $X^{-1}$ denote the unique multiplicative inverse of $X\in1\oplus\mathbb{R}^{d}\oplus\ldots\oplus\left(\mathbb{R}^{d}\right)^{\lfloor p\rfloor}$
\end{defn}
We first recall Lyons' extension theorem, which will be used multiple
times in what follows, and in the following form: 
\begin{fact}
\label{Lyons extension}(Theorem 2.2.1 in \cite{Lyons98}) Let $p\geq1$
and $X=\left(1,X^{1},\ldots,X^{\lfloor p\rfloor}\right)$ be a $p$-weak
geometric rough path. Then there exists a unique continuous path $\mathbf{X}=\left(1,X^{1},\ldots\right)\in T\left(\left(\mathbb{R}^{d}\right)\right)$
which extends $X$, $\mathbf{X}_{0}=\left(1,0\ldots\right)$ and for
all $l\geq\lfloor p\rfloor$, 
\[
\sup_{s<t_{1}<\ldots<t_{n}<t}\left(\sum_{i=1}^{n}\left\Vert \pi_{l}\left(\mathbf{X}_{t_{i}}^{-1}\mathbf{X}_{t_{i+1}}\right)\right\Vert ^{\frac{p}{l}}\right)^{\frac{1}{p}}\leq\frac{\beta^{l-1}}{\left(\frac{l}{p}\right)!}\left\Vert X\right\Vert _{p-var,\left[s,t\right]}^{l}.
\]
\end{fact}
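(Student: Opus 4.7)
Set $N=\lfloor\gamma\rfloor$. My plan is to reduce the theorem to a factorial decay estimate for an iterated integral whose integrand is the increment of a controlled path, and then to establish this latter estimate via a Young--Lyons point-removal argument. The starting point is the identity already displayed in the introduction,
$$Y_t - Y_s - \sum_{k=1}^{N} f^{\circ k}(Y_s)\,X^k_{s,t} \;=\; \int_{s<s_1<\cdots<s_N<t}\bigl[f^{\circ N}(Y_{s_1}) - f^{\circ N}(Y_s)\bigr]\,dX_{s_1}\otimes\cdots\otimes dX_{s_N},$$
where for $N>\lfloor p\rfloor$ the iterated integrals of $X$ of orders $\lfloor p\rfloor + 1,\ldots,N$ are defined via Lyons' extension theorem (Fact~\ref{Lyons extension}) and already enjoy the factorial decay $\|\pi_l(\mathbf{X})\|_{p-var;[s,t]}\leq \beta^{l-1}(l/p)!^{-1}\|X\|_{p-var;[s,t]}^l$.

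The main work is a separate \emph{factorial decay lemma for controlled paths}: for $\phi$ a $Lip(1)$ map into the appropriate tensor space one should have
$$\Bigl|\int_{s<s_1<\cdots<s_n<t}\bigl[\phi(Y_{s_1})-\phi(Y_s)\bigr]\,dX_{s_1}\otimes\cdots\otimes dX_{s_n}\Bigr| \;\leq\; \frac{C_p\,\beta^n}{(n/p)!}\,|\phi|_{Lip(1)}\,M'\,\omega(s,t)^{(n+1)/p},$$
with $M'$ gathering the path data. I would prove this by lifting Young's point-removal technique to the rough regime: on a partition $D$ of $[s,t]$, approximate the integral by a Riemann-type sum built from the extended iterated integrals of $X$; by pigeonhole there is a removable point $t_j$ whose correction is controlled by $\omega(s,t)^{(n+1)/p}(\#D)^{-(n+1)/p}$ times coefficients of $\phi$ and $\mathbf{X}$; collapse $D$ iteratively to $\{s,t\}$ and sum the corrections using the neoclassical inequality, exactly mirroring the proof of Fact~\ref{Lyons extension}. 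Applying the lemma with $\phi = f^{\circ N}$ and $n = N$ yields \eqref{eq:main inequality}.

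The $\max_m$ over $\lfloor\gamma\rfloor-\lfloor p\rfloor+1 \leq m \leq \lfloor\gamma\rfloor$ in the statement reflects what happens in the removal step: merging two adjacent intervals forces one to expand $\phi(Y_\cdot)$ using the Davie/Euler approximation for $Y$, producing corrective terms proportional to $f^{\circ m}$ for $m$ in this range, each of which must be bounded in $Lip(1)$. The constant $M_{p,\gamma}$ absorbs the powers $|f|^{\lfloor p\rfloor+1}_{Lip(\gamma\wedge\lfloor p\rfloor+1)}$ and $\|X\|_{p-var}^{p+1}$ produced by the finite number of rough-path expansion steps needed, and the geometric factor $\beta$ from \eqref{eq:beta condition} is tuned precisely so that a common constant works uniformly across all orders of the removal.

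The principal obstacle will be the \emph{closure} of the point-removal argument: unlike in Fact~\ref{Lyons extension}, where the integrand is a pure tensor increment of $X$, here $\phi(Y_{s_1})-\phi(Y_s)$ depends on $Y$, so the ``controlled Riemann sum'' produced after merging two intervals is \emph{a priori} not of the same form as before. The proof must show that, modulo terms of the required factorial order, this new sum can be rewritten as a controlled integrand of the same type, allowing the iteration to continue without the factorial constant degrading with $n$. Ensuring that the resulting geometric series telescopes into the universal $\beta$ of \eqref{eq:beta condition}, rather than a constant that blows up with $\lfloor\gamma\rfloor$, is the delicate point and matches the constant produced by Fact~\ref{Lyons extension}.
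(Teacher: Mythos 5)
You have proved the wrong statement. Fact~\ref{Lyons extension} is Lyons' extension theorem: it asserts that a level-$\lfloor p\rfloor$ weak geometric rough path $X$ admits a unique extension $\mathbf X$ to the full tensor algebra with the quantitative bound $\beta^{l-1}\big((l/p)!\big)^{-1}\|X\|_{p\text{-}var}^l$ on each level $l\geq\lfloor p\rfloor$. It says nothing about $Y$, about $f$, or about the rough differential equation~\eqref{eq:controlled differential equation 2}. The paper does not prove it at all --- it is \emph{cited} (Theorem~2.2.1 of \cite{Lyons98}) and used as a black box (notably in the step~\eqref{eq:drop points} of Lemma~\ref{lem:control path lemma}). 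A blind proof of Fact~\ref{Lyons extension} should therefore either reproduce Lyons' argument (multiplicativity of the extension, the binomial/neoclassical inequality, the point-removal estimate on the $(\lfloor p\rfloor+1)$-st level, and induction on $l$) or explicitly say that the statement is being quoted from \cite{Lyons98}.

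What you actually wrote is a sketch of the paper's main result, Theorem~\ref{thm:main result}, for the remainder $Y_t-Y_s-\sum_{k\le\lfloor\gamma\rfloor}f^{\circ k}(Y_s)X^k_{s,t}$. Indeed, you invoke Fact~\ref{Lyons extension} several times \emph{inside} your argument, which already shows you are treating it as given rather than as the goal. As an outline of Theorem~\ref{thm:main result} your plan is in the right spirit --- it mirrors the paper's strategy of isolating a factorial decay lemma for controlled paths (Lemma~\ref{lem:control path lemma}), proving it by Young--Lyons point removal with the neoclassical inequality, and then verifying that $\big(Y,f^{\circ 1}(Y),\ldots,f^{\circ\lfloor\gamma\rfloor}(Y)\big)$ satisfies its hypotheses via the Euler/Davie estimate of \cite{FV10}. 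But that is not the statement you were asked to address, and the identity you start from, rewriting the remainder as $\int\big[f^{\circ N}(Y_{s_1})-f^{\circ N}(Y_s)\big]\,dX_{s_1}\otimes\cdots\otimes dX_{s_N}$, is stated in the paper only for bounded-variation $X$ and is not used as the entry point to the rough case; the paper instead runs a backward induction on the component index $m$ in Lemma~\ref{lem:control path lemma}, with the base case $\lfloor\gamma\rfloor-\lfloor p\rfloor+1\le m\le\lfloor\gamma\rfloor$ supplied by comparison with a smooth approximating path.
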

\begin{rem}
Note that for paths with finite $1$-variation, the $\left(X^{k}\right)_{k\geq1}$
defined in this Theorem are exactly the iterated integrals of $X$.
Hence no confusion will arise by using this same notation as in (\ref{eq:iterated integrals}).
\end{rem}

\subsection{The Proof}
\begin{lem}
\label{lem:control path lemma}Let $p\geq1$ and $\gamma>p-1$. Let
$\left(1,X^{1},\ldots,X^{\lfloor p\rfloor}\right)$ be a $p$-weak
geometric rough path. Let $Y^{\left(i\right)}$ be a function $\left[0,T\right]\rightarrow L\left(\left(\mathbb{R}^{d}\right)^{\otimes i},\mathbb{R}^{e}\right)$
and $\left(Y^{\left(0\right)},Y^{\left(1\right)},\ldots,Y^{\left(\lfloor\gamma\rfloor\right)}\right)$
satisfies, for $\lfloor\gamma\rfloor-\lfloor p\rfloor+1\leq m\leq\lfloor\gamma\rfloor$,
\begin{equation}
\left|Y_{s}^{\left(m\right)}-\sum_{l=0}^{\lfloor\gamma\rfloor-m}Y_{s}^{\left(l+m\right)}X_{s,t}^{l}\right|\leq\frac{1}{\left(\frac{\lfloor\gamma\rfloor-m+1}{p}\right)!}M\beta^{\lfloor\gamma\rfloor-m}\left\Vert X\right\Vert _{p-var,\left[s,t\right]}^{\gamma-m},\label{eq:first p control}
\end{equation}
and for $m\leq\lfloor\gamma\rfloor-\lfloor p\rfloor$, 
\begin{equation}
\int_{s}^{t}Y_{s_{1}}^{\left(m+1\right)}\mathrm{d}X_{s_{1}}=Y_{t}^{\left(m\right)}-Y_{s}^{\left(m\right)}.\label{eq:rpFTC}
\end{equation}
For $l\geq\lfloor p\rfloor+1$, let $X^{l}$ denote the projection
to $\left(\mathbb{R}^{d}\right)^{\otimes l}$ of the unique extension
of $\left(1,X^{1},\ldots,X^{\lfloor p\rfloor}\right)$ given in Fact
\ref{Lyons extension}. Then (\ref{eq:first p control}) holds for
all $0\leq m\leq\lfloor\gamma\rfloor$. \end{lem}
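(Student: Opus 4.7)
I will argue by downward induction on $m$, with the cases $\lfloor\gamma\rfloor - \lfloor p\rfloor + 1 \le m \le \lfloor\gamma\rfloor$ being exactly the hypothesis. Fix $m \le \lfloor\gamma\rfloor - \lfloor p\rfloor$ and assume (\ref{eq:first p control}) at every level strictly greater than $m$. Set
\[
E_m(s,t) := Y_t^{(m)} - \sum_{l=0}^{\lfloor\gamma\rfloor - m} Y_s^{(l+m)}\, X_{s,t}^l,
\]
so that (reading the $Y_s^{(m)}$ on the LHS of (\ref{eq:first p control}) as a typo for $Y_t^{(m)}$, since otherwise the $l=0$ term makes the statement vacuous) the goal becomes a uniform bound on $|E_m(s,t)|$.

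The main algebraic input is Chen's identity $X_{s,t}^l = \sum_{a+b=l} X_{s,u}^a X_{u,t}^b$: substituting it into the definition of $E_m$ and reindexing the double sum yields the super-additivity identity
\[
E_m(s,t) - E_m(s,u) - E_m(u,t) = \sum_{b=1}^{\lfloor\gamma\rfloor - m} E_{b+m}(s,u)\, X_{u,t}^b,
\]
in which every index $b+m$ on the right lies strictly above $m$, so the induction hypothesis bounds each $|E_{b+m}(s,u)|$ by a factorially decaying quantity. Combining this with the bound $|X_{u,t}^b| \le \beta^{b-1}\|X\|_{p-var,[u,t]}^b/(b/p)!$ --- given by Fact \ref{Lyons extension} for $b \ge \lfloor p\rfloor + 1$ and directly by the definition of $\|\cdot\|_{p-var}$ for $b \le \lfloor p\rfloor$ --- and then applying the neoclassical inequality together with the control-function property $\|X\|_{p-var,[s,u]}^p + \|X\|_{p-var,[u,t]}^p \le \|X\|_{p-var,[s,t]}^p$, the resulting sum telescopes into a single term carrying the factorial $((\lfloor\gamma\rfloor-m+1)/p)!$ in the denominator and the required power $\|X\|_{p-var,[s,t]}^{\gamma-m}$ on top.

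From this super-additive estimate I would conclude via a Young--Lyons partition-removal: given any fine partition of $[s,t]$, iteratively drop the point whose removal produces the smallest local $p$-variation, bounding each contribution by the super-additivity defect above. The accumulated errors form precisely the geometric series appearing in (\ref{eq:beta condition}), and the FTC relation (\ref{eq:rpFTC}) makes $E_m(t_i, t_{i+1})$ negligible on sufficiently fine partitions, so passing to the limit produces the target bound at level $m$ with the correct constants. The principal obstacle is that the neoclassical inequality naturally pairs exponents $k/p$ (integer $k$) with $\Gamma(k/p+1)$, whereas our inductive estimate forces the mismatched shape $(\gamma - m,\, (\lfloor\gamma\rfloor - m + 1)/p)$ coming from the blended Taylor/rough structure; this misalignment, together with verifying that the removal series and the extra factor $p$ from neoclassical absorb cleanly into the single weight $\beta$, is the delicate bookkeeping that forces us to follow Lyons' 1998 approach rather than the simpler 1994 alternative.
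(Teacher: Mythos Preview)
Your proposal is correct and follows essentially the same route as the paper: downward induction on $m$, Chen's identity to identify the additivity defect $E_m(s,t)-E_m(s,u)-E_m(u,t)=\sum_{b\ge 1}E_{b+m}(s,u)X_{u,t}^{b}$, the induction hypothesis together with Fact~\ref{Lyons extension} and the neoclassical inequality to bound that defect, and a Young--Lyons point-removal to sum. The only cosmetic difference is that the paper packages the argument as a Riemann-type sum $(\,\cdot\,)^{\mathcal P}$ anchored at $s$ (whose trivial-partition value is $0$ and whose fine-partition limit is $E_m(s,t)$ by (\ref{eq:rpFTC})), whereas you phrase the same computation in sewing-lemma language with $\sum_i E_m(t_i,t_{i+1})\to 0$; the estimates and constants coincide.
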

\begin{proof}
We shall carry out backward induction on $k$ starting from $\lfloor\gamma\rfloor-\lfloor p\rfloor$
and moving all the way down to $1$. 

The base induction step holds because of the assumption. We will assume
from now that $k\leq\lfloor\gamma\rfloor-\lfloor p\rfloor$. 

For the induction step, note that by (\ref{eq:rpFTC}) and that $\lfloor\gamma\rfloor-k\geq\lfloor p\rfloor$,
if $\mathcal{P}=\left(t_{0}<t_{1}<\ldots<t_{n}\right)$, then 
\begin{equation}
Y_{t}^{\left(k\right)}-\sum_{l=0}^{\lfloor\gamma\rfloor-k}Y_{s}^{\left(k+l\right)}X_{s,t}^{l}=\lim_{\left|\mathcal{P}\right|\rightarrow0}\sum_{i=0}^{\left|\mathcal{P}\right|}\sum_{l=1}^{\lfloor\gamma\rfloor-k}\left(Y_{t_{i}}^{\left(k+l\right)}-\sum_{l_{1}=0}^{\lfloor\gamma\rfloor-k-l}Y_{s}^{\left(k+l+l_{1}\right)}X_{s,t_{i}}^{l_{1}}\right)X_{t_{i},t_{i+1}}^{l}.\label{eq:inductive}
\end{equation}

We first show that the term
\begin{equation}
\sum_{i=0}^{\left|\mathcal{P}\right|}\sum_{l=1}^{\lfloor\gamma\rfloor-k}\sum_{l_{1}=0}^{\lfloor\gamma\rfloor-k-l}Y_{s}^{\left(k+l+l_{1}\right)}X_{s,t_{i}}^{l_{1}}X_{t_{i},t_{i+1}}^{l}.\label{eq:indep of partition}
\end{equation}
 is in fact independent of $\mathcal{P}$. 
\begin{eqnarray*}
 &  & \sum_{i=0}^{\left|\mathcal{P}\right|}\sum_{l=1}^{\lfloor\gamma\rfloor-k}\sum_{l_{1}=0}^{\lfloor\gamma\rfloor-k-l}Y_{s}^{\left(k+l+l_{1}\right)}X_{s,t_{i}}^{l_{1}}X_{t_{i},t_{i+1}}^{l}\\
 & = & \sum_{i=0}^{\left|\mathcal{P}\right|}\left[\sum_{0\leq l+l_{1}\leq\lfloor\gamma\rfloor-k}Y_{s}^{\left(k+l+l_{1}\right)}X_{s,t_{i}}^{l_{1}}X_{t_{i},t_{i+1}}^{l}-\sum_{l_{1}=0}^{\lfloor\gamma\rfloor-k}Y_{s}^{\left(k+l_{1}\right)}X_{s,t_{i}}^{l_{1}}\right]\\
 & = & \sum_{i=0}^{\left|\mathcal{P}\right|}\left[\sum_{r=0}^{\lfloor\gamma\rfloor-k}\sum_{l+l_{1}=r}Y_{s}^{\left(k+r\right)}X_{s,t_{i}}^{l_{1}}X_{t_{i},t_{i+1}}^{l}-\sum_{l_{1}=0}^{\lfloor\gamma\rfloor-k}Y_{s}^{\left(k+l_{1}\right)}X_{s,t_{i}}^{l_{1}}\right]\\
 & = & \sum_{i=0}^{\left|\mathcal{P}\right|}\left[\sum_{r=0}^{\lfloor\gamma\rfloor-k}Y_{s}^{\left(k+r\right)}X_{s,t_{i+1}}^{r}-\sum_{r=0}^{\lfloor\gamma\rfloor-k}Y_{s}^{\left(k+r\right)}X_{s,t_{i}}^{r}\right]\\
 & = & \sum_{r=1}^{\lfloor\gamma\rfloor-k}Y_{s}^{\left(k+r\right)}X_{s,t}^{r}.
\end{eqnarray*}
Let 
\[
\left(Y_{s}^{\left(k\right)}-\sum_{l=0}^{\lfloor\gamma\rfloor-k}Y_{s}^{\left(l\right)}X_{s,t}^{l}\right)^{\mathcal{P}}=\sum_{i=0}^{\left|\mathcal{P}\right|}\sum_{l=1}^{\lfloor\gamma\rfloor-k}\left(Y_{t_{i}}^{\left(k+l\right)}-\sum_{l_{1}=0}^{\lfloor\gamma\rfloor-k-l}Y_{s}^{\left(k+l+l_{1}\right)}X_{s,t_{i}}^{l_{1}}\right)X_{t_{i},t_{i+1}}^{l}.
\]
Since (\ref{eq:indep of partition}) is independent of the partition,
\begin{eqnarray}
 &  & \left(Y_{s}^{\left(k\right)}-\sum_{l=0}^{\lfloor\gamma\rfloor-k}Y_{s}^{\left(l\right)}X_{s,t}^{l}\right)^{\mathcal{P}}-\left(Y_{s}^{\left(k\right)}-\sum_{l=0}^{\lfloor\gamma\rfloor-k}Y_{s}^{\left(l\right)}X_{s,t}^{l}\right)^{\mathcal{P}\backslash\left\{ t_{j}\right\} }\\
 &  & =\sum_{l=1}^{\lfloor\gamma\rfloor-k}Y_{t_{j-1}}^{\left(k+l\right)}X_{t_{j-1},t_{j}}^{l}+\sum_{l=1}^{\lfloor\gamma\rfloor-k}Y_{t_{j}}^{\left(k+l\right)}X_{t_{j},t_{j+1}}^{l}-\sum_{l=1}^{\lfloor\gamma\rfloor-k}Y_{t_{j-1}}^{\left(k+l\right)}X_{t_{j-1},t_{j+1}}^{l}\nonumber \\
 &  & =\sum_{l=1}^{\lfloor\gamma\rfloor-k}\left(Y_{t_{j}}^{\left(k+l\right)}-\sum_{l_{1}=0}^{\lfloor\gamma\rfloor-k-l}Y_{t_{j-1}}^{\left(k+l+l_{1}\right)}X_{t_{j-1},t_{j}}^{l_{1}}\right)X_{t_{j},t_{j+1}}^{l}.\label{eq:after subtraction}
\end{eqnarray}

By induction hypothesis (\ref{eq:first p control}) which holds for
$m>k$ and Theorem 2.2.1 in \cite{Lyons98}, 
\begin{eqnarray}
 &  & \left|\sum_{l=1}^{\lfloor\gamma\rfloor-k}\left(Y_{t_{j}}^{\left(k+l\right)}-\sum_{l_{1}=0}^{\lfloor\gamma\rfloor-k-l}Y_{t_{j-1}}^{\left(k+l+l_{1}\right)}X_{t_{j-1},t_{j}}^{l_{1}}\right)X_{t_{j},t_{j+1}}^{l}\right|\nonumber \\
 & \leq & \sum_{l=1}^{\lfloor\gamma\rfloor-k}\frac{1}{\left(\frac{\lfloor\gamma\rfloor-k-l}{p}!\right)\left(\frac{l}{p}!\right)}M\beta^{\lfloor\gamma\rfloor-k-l}\left\Vert X\right\Vert _{p-var,\left[t_{j-1},t_{j}\right]}^{\gamma-k-l}\beta^{l-1}\left\Vert X\right\Vert _{p-var,\left[t_{j},t_{j+1}\right]}^{l}\nonumber \\
 & \leq & \frac{1}{\left(\frac{\lfloor\gamma\rfloor-k}{p}!\right)}\frac{p}{\beta}M\beta^{\lfloor\gamma\rfloor-k}\left\Vert X\right\Vert _{p-var,\left[t_{j-1},t_{j+1}\right]}^{\gamma-k},\label{eq:drop points}
\end{eqnarray}
where the final line is obtained by the neoclassical inequality in
\cite{HaraHino10}. 

Let $\omega\left(s,t\right)=\left\Vert X\right\Vert _{p-var,\left[s,t\right]}^{p}$.
We now choose $j$ such that, for $\left|\mathcal{P}\right|\geq2$,
\[
\omega\left(t_{j-1},t_{j+1}\right)\leq\left(\frac{2}{\left|\mathcal{P}\right|-1}\wedge1\right)\omega\left(s,t\right)
\]
which exists since 
\[
\sum_{i=1}^{\left|\mathcal{P}\right|-1}\omega\left(t_{i-1},t_{i+1}\right)\leq2\omega\left(s,t\right)
\]
and also that 
\[
\omega\left(t_{j-1},t_{j+1}\right)\leq\omega\left(s,t\right)
\]
 for all $j$. Then as $\gamma-k\geq\lfloor p\rfloor+1$, (\ref{eq:drop points})
is less than or equal to 
\[
\frac{1}{\left(\frac{\lfloor\gamma\rfloor-k}{p}!\right)}\frac{p}{\beta}M\beta^{\lfloor\gamma\rfloor-k}\left(\frac{2}{\left|\mathcal{P}\right|-1}\wedge1\right)^{\frac{\lfloor p\rfloor+1}{p}}\left\Vert X\right\Vert _{p-var,\left[s,t\right]}^{\gamma-k}.
\]

By removing points successively from $\mathcal{P}$ and using that
$\left(Y_{s}^{\left(k\right)}-\sum_{l=0}^{N-k}Y_{s}^{\left(k+l\right)}X_{s,t}^{l}\right)^{\left\{ s,t\right\} }=0$,
we have 
\begin{eqnarray*}
\left|\left(Y_{s}^{\left(k\right)}-\sum_{l=0}^{N-k}Y_{s}^{\left(l\right)}X_{s,t}^{l}\right)^{\mathcal{P}}\right| & \leq & \frac{1}{\left(\frac{\lfloor\gamma\rfloor-k}{p}!\right)}\frac{p}{\beta}M\beta^{\lfloor\gamma\rfloor-k}\sum_{r=2}^{\infty}\left(\frac{2}{r-1}\wedge1\right)^{\frac{\lfloor p\rfloor+1}{p}}\left\Vert X\right\Vert _{p-var,\left[s,t\right]}^{\gamma-k}\\
 & \leq & \frac{1}{\left(\frac{\lfloor\gamma\rfloor-k}{p}!\right)}M\beta^{\lfloor\gamma\rfloor-k}\sum_{r=2}^{\infty}\left(\frac{2}{r-1}\wedge1\right)^{\frac{\lfloor p\rfloor+1}{p}}\left\Vert X\right\Vert _{p-var,\left[s,t\right]}^{\gamma-k},
\end{eqnarray*}
where the final line follows from (\ref{eq:beta condition}). 

By taking limit as $\left|\mathcal{P}\right|\rightarrow0$, (\ref{eq:first p control})
follows for $m=k$. 
\end{proof}
\begin{proof}[Proof of Theorem 1]The only thing to prove is that
$\left(Y,f^{\circ1}\left(Y\right),\ldots,f^{\circ\left(\lfloor\gamma\rfloor\right)}\left(Y\right)\right)$
satisfies Lemma \ref{lem:control path lemma}. 

Let $x^{s,t}:\left[s,t\right]\rightarrow\mathbb{R}^{d}$ be a continuous
path with finite $1$-variation such that 
\[
S_{\lfloor p\rfloor}\left(x^{s,t}\right)_{s,t}=S_{\lfloor p\rfloor}\left(X\right)_{s,t},
\]
and 
\[
\int_{s}^{t}\left|\mathrm{d}x_{u}^{s,t}\right|\leq c_{p}\left|X\right|_{p-var,\left[s,t\right]}
\]
for a function $c_{p}$ is $p$ which is specified in \cite{FV10}
along with the existence of $x^{s,t}$. 

Let $\pi\left(s,Y_{s};x^{s,t}\right)$ denote the solution to the
rough differential equation 
\begin{eqnarray*}
\mathrm{d}Y_{t}^{s,t} & = & f\left(Y_{t}^{s,t}\right)\mathrm{d}x^{s,t}\\
Y_{s}^{s,t} & = & Y_{s}.
\end{eqnarray*}

Note that for $\lfloor\gamma\rfloor-\lfloor p\rfloor+1\leq m\leq\lfloor\gamma\rfloor$, 

\begin{eqnarray}
 &  & \left|f^{\circ\left(m\right)}\left(Y_{t}\right)-\sum_{k=0}^{\lfloor\gamma\rfloor-m}f^{\circ\left(m+k\right)}\left(Y_{s}\right)X_{s,t}^{k}\right|\nonumber \\
 & \leq & \left|f^{\circ m}\left(Y_{t}\right)-f^{\circ m}\left(Y_{t}^{s,t}\right)\right|+\left|f^{\circ m}\left(Y_{t}^{s,t}\right)-\sum_{k=0}^{\lfloor\gamma\rfloor-m}f^{\circ\left(m+k\right)}\left(Y_{s}\right)S_{k}\left(x^{s,t}\right)_{s,t}\right|\label{eq:approx by smooth paths}
\end{eqnarray}

By Theorem 10.16 in \cite{FV10}, 
\begin{eqnarray*}
\left|Y_{t}-Y_{t}^{s,t}\right| & \leq & C_{p}\left|f\right|_{Lip\left(\gamma\wedge\left(\lfloor p\rfloor+1\right)\right)}^{\gamma\wedge\left(\lfloor p\rfloor+1\right)}\left|X\right|_{p-var,\left[s,t\right]}^{\gamma\wedge\left(\lfloor p\rfloor+1\right)}.
\end{eqnarray*}

Therefore, 
\begin{eqnarray}
 &  & \left|f^{\circ m}\left(Y_{t}\right)-f^{\circ m}\left(Y_{t}^{s,t}\right)\right|\nonumber \\
 & \leq & \left|f^{\circ m}\right|_{Lip\left(1\right)}\left|Y_{t}-Y_{t}^{s,t}\right|\nonumber \\
 & \leq & C_{p}\left|f^{\circ m}\right|_{Lip\left(1\right)}\left|f\right|_{Lip\left(\gamma\wedge\left(\lfloor p\rfloor+1\right)\right)}^{\gamma\wedge\left(\lfloor p\rfloor+1\right)}\left|X\right|_{p-var,\left[s,t\right]}^{\gamma\wedge\left(\lfloor p\rfloor+1\right)}\nonumber \\
 & \leq & C_{p}\left|f^{\circ m}\right|_{Lip\left(1\right)}\left|f\right|_{Lip\left(\gamma\wedge\left(\lfloor p\rfloor+1\right)\right)}^{\gamma\wedge\left(\lfloor p\rfloor+1\right)}\left(\left|X\right|_{p-var,\left[0,T\right]}\vee1\right)^{\lfloor p\rfloor+1}\left|X\right|_{p-var,\left[s,t\right]}^{\gamma-m}.\label{eq:first term}
\end{eqnarray}
where the crucial step is in the final line where we used $\lfloor\gamma\rfloor-m\leq\lfloor p\rfloor+1$. 

For the second term in (\ref{eq:approx by smooth paths}), 

\begin{eqnarray}
 &  & \left|f^{\circ m}\left(Y_{t}^{s,t}\right)-\sum_{k=0}^{\lfloor\gamma\rfloor-m}f^{\circ\left(m+k\right)}\left(Y_{s}\right)S_{k}\left(x^{s,t}\right)_{s,t}\right|\nonumber \\
 & = & \left|\int_{s<s_{1}<\ldots<s_{\lfloor\gamma\rfloor-m}<t}f^{\circ\lfloor\gamma\rfloor}\left(Y_{s_{1}}^{s,t}\right)-f^{\circ\lfloor\gamma\rfloor}\left(Y_{s}\right)\mathrm{d}x_{s_{1}}^{s,t}\ldots\mathrm{d}x_{s_{N-m}}^{s,t}\right|\\
 & \leq & \left|f^{\circ\lfloor\gamma\rfloor}\right|_{Lip\left(1\right)}\left|Y_{\cdot}^{s,t}\right|_{p-var,\left[s,t\right]}^{\gamma-\lfloor\gamma\rfloor}\left|X\right|_{p-var,\left[s,t\right]}^{\lfloor\gamma\rfloor-m}\nonumber \\
 & \leq & C_{p}\left|f^{\circ\lfloor\gamma\rfloor}\right|_{Lip\left(1\right)}\left(\left|f\right|_{Lip\left(\gamma\wedge\lfloor p\rfloor+1\right)}\vee1\right)^{p\left(\gamma-\lfloor\gamma\rfloor\right)}\label{eq:second term}\\
 &  & \times\left(\left|X\right|_{p-var,\left[0,T\right]}\vee1\right)^{\left(p-1\right)\left(\gamma-\lfloor\gamma\rfloor\right)}\left|X\right|_{p-var,\left[s,t\right]}^{\gamma-m}.
\end{eqnarray}

Combining (\ref{eq:approx by smooth paths}), (\ref{eq:first term})
and (\ref{eq:second term}), we have 
\begin{eqnarray*}
 &  & \left|f^{\circ\left(m\right)}\left(Y_{t}\right)-\sum_{k=0}^{\lfloor\gamma\rfloor-m}f^{\circ\left(m+k\right)}\left(Y_{s}\right)X_{s,t}^{k}\right|\\
 & \leq & 2C_{p}\max_{\lfloor\gamma\rfloor-\lfloor p\rfloor+1\leq m\leq\lfloor\gamma\rfloor}\left|f^{\circ m}\right|_{Lip\left(1\right)}\left(\left|f\right|_{Lip\left(\gamma\wedge\lfloor p\rfloor+1\right)}\vee1\right)^{\lfloor p\rfloor+1}\\
 &  & \times\left(\left|X\right|_{p-var,\left[0,T\right]}\vee1\right)^{\lfloor p\rfloor+1}\left|X\right|_{p-var,\left[s,t\right]}^{\lfloor\gamma\rfloor-m},
\end{eqnarray*}

It now suffices to show (\ref{eq:rpFTC}). 

Note that for $m\leq\lfloor\gamma\rfloor-\lfloor p\rfloor$, 
\begin{eqnarray*}
\int_{s}^{t}f^{\circ\left(m+1\right)}\left(Y_{u}\right)\mathrm{d}X_{u} & = & \int_{s}^{t}D\left(f^{\circ m}\right)f\left(Y_{u}\right)\mathrm{d}X_{u}\\
 & = & \int_{s}^{t}D\left(f^{\circ m}\right)\mathrm{d}Y_{u}\\
 & = & f^{\circ m}\left(Y_{t}\right)-f^{\circ m}\left(Y_{s}\right).
\end{eqnarray*}

\end{proof}

\end{document}